\documentclass[12pt]{amsart}

\usepackage{amssymb,latexsym,amscd}

\usepackage{amsthm,amsmath}

\textheight25cm
\textwidth17,5cm
\voffset-2cm
\hoffset-2,3cm

\newcommand{\cO}{\mathcal{O}}

\newcommand{\cE}{\mathcal{E}}

\newcommand{\cF}{\mathcal{F}}

\newcommand{\cA}{\mathcal{A}}
\newcommand{\cB}{\mathcal{B}}

\newcommand{\DD}{\mathcal{D}}

\newcommand{\Sym}{\mathrm{Sym}}


\newcommand{\lra}{\longrightarrow}

\newcommand{\ra}{\rightarrow}



\newcommand{\PP}{\mathcal{P}}
\newcommand{\EE}{\mathcal{E}}

\newcommand{\Sbb}{\mathbb{S}}


\newcommand{\End}{\mathrm{End}}

\newcommand{\Hom}{\mathrm{Hom}}
\newcommand{\Ext}{\mathrm{Ext}}

\newcommand{\GL}{\mathrm{GL}}
\newcommand{\SL}{\mathrm{SL}}
\newcommand{\SO}{\mathrm{SO}}
\newcommand{\OO}{\mathrm{O}}
\newcommand{\Sp}{\mathrm{Sp}}

\newcommand{\im}{\mathrm{im}}

\newcommand{\coker}{\mathrm{coker}}


\theoremstyle{plain}

\newtheorem{thm}{Theorem}[section]

\newtheorem{lem}[thm]{Lemma}

\newtheorem{prop}[thm]{Proposition}

\begin{document}

\title[]{Orthogonal bundles over curves in characteristic two}

\begin{abstract}
Let $X$ be a smooth projective curve of genus $g \geq 2$ defined over a field of characteristic two. We give
examples of stable orthogonal bundles with unstable underlying vector bundles and use them
to give counterexamples to Behrend's conjecture on the canonical reduction of principal
$G$-bundles for $G = \SO(n)$ with $n \geq 7$.
\end{abstract}

\author{Christian Pauly}

\address{D\'epartement de Math\'ematiques \\ Universit\'e de Montpellier II - Case Courrier 051 \\ Place Eug\`ene Bataillon \\ 34095 Montpellier Cedex 5 \\ France}
\email{pauly@math.univ-montp2.fr}



\subjclass[2000]{Primary 14H60, 14H25}



\maketitle

Let $X$ be a smooth projective curve of genus $g \geq 2$ and let $G$ be a connected reductive 
linear algebraic group defined over a field $k$ of arbitrary characteristic.  One
associates to any principal $G$-bundle $E_G$ over $X$ a reduction $E_P$  of $E_G$ to
a parabolic subgroup $P \subset G$, the so-called {\em canonical reduction}
--- see e.g. \cite{Ra}, \cite{Be}, \cite{BH} or \cite{H} for its definition. We only mention
here that in the case $G = \GL(n)$ the canonical reduction coincides with the
Harder-Narasimhan filtration of the rank-$n$ vector bundle  associated to $E_G$.

\bigskip

In \cite{Be} (Conjecture 7.6) K. Behrend conjectured that for any principal $G$-bundle
$E_G$ over $X$ the canonical reduction $E_P$ has no infinitesimal deformations, or 
equivalently, that the vector space
$H^0(X, E_P \times^P \mathfrak{g}/ \mathfrak{p})$ is zero. Here $\mathfrak{p}$ and
$\mathfrak{g}$ are the Lie algebras of $P$ and $G$ respectively.  
Behrend's conjecture implies that the canonical reduction $E_P$ is defined over the
same base field as $E_G$. 

\bigskip

We note that this conjecture holds for the structure groups $\GL(n)$ and  $\Sp(2n)$ in any characteristic,
and also for $\SO(n)$ in any characteristic different from two --- see \cite{H} section 2.
On the other hand, a counterexample to Behrend's conjecture for the exceptional group $G_2$ in characteristic two has been
constructed recently by J. Heinloth in \cite{H} section 5.

\bigskip

In this note we focus on $\SO(n)$-bundles in characteristic two. As a starting point we consider
the rank-$2$ vector bundle $F_*L$ given by the direct image under the Frobenius map $F$ of a line bundle $L$ over 
the curve $X$ and observe (Proposition \ref{inclusionA}) that the $\SO(3)$-bundle $\cA := \End_0(F_*L)$ is stable, but that
its underlying vector bundle is unstable and, in particular, destabilized by the rank-$2$
vector bundle $F_*\cO_X$. We use this 
observation to show that the $\SO(7)$-bundle 
$$F_*\cO_X \oplus (F_* \cO_X)^* \oplus \cA$$ 
equipped
with the natural quadratic form gives a counterexample to Behrend's conjecture. Replacing
$\cA$ by $\hat{\cA} = \End(F_*L)$ we obtain in the same way a counterexample for $\SO(8)$, and more
generally for $\SO(n)$ with $n \geq 7$ after adding direct 
summands of hyperbolic planes.  Note that Behrend's conjecture holds for $\SO(n)$ with $n \leq 6$ because
of the exceptional isomorphisms with other classical groups.

\bigskip

The first three sections are quite elementary and recall well-known facts on quadratic forms,
orthogonal groups and their Lie algebras, as well as orthogonal bundles in characteristic two. In the last section 
we give an example (Proposition \ref{exso7}) of an unstable $\SO(7)$-bundle having its canonical reduction only defined
after an inseparable extension of the base field.

\bigskip

This note can be considered as an appendix to the
recent paper \cite{H} by J. Heinloth, whom I would like to thank for useful discussions.

\bigskip

\section{Quadratic forms in characteristic two}

The purpose of this section is to recall definitions and basic properties of quadratic forms in
characteristic two.

\bigskip

Let $k$ be an algebraically closed field of characteristic two and let $V$ be a vector space of
dimension $n$ over $k$. We denote by $\Sym^2 V$ and $\Lambda^2 V$ the symmetric and exterior square
of $V$ and by $F (V) : = V \otimes_k k$ the Frobenius-twist of $V$. We observe that there exists
a well-defined $k$-linear injective map
$$ F(V) \hookrightarrow \Sym^2 V \qquad v \otimes \lambda \mapsto \lambda ( v \cdot v),$$
where $v\cdot v$ denotes the symmetric square of $v$. More precisely, we have the following exact sequence
of $k$-vector spaces
$$ 0 \lra F(V) \longrightarrow \Sym^2 V \lra  \Lambda^2 V \lra 0.$$
Let $\sigma$ denote the involution on $V \otimes V$ defined by $\sigma (v \otimes w) = 
w \otimes v$ and let $\Sbb_2(V) \subset V \otimes V$ denote the $\sigma$-invariant subspace.
Then one has the exact sequence
$$ 0 \lra \Sbb_2(V) \lra V \otimes V \lra \Lambda^2 V \lra 0,$$
where the last arrow denotes the canonical projection of $V \otimes V$ onto $\Lambda^2 V$.
By choosing a basis of $V$, it can be checked that the image of $\Sbb_2(V)$ under the
canonical projection $V \otimes V \rightarrow \Sym^2 V$ equals the subspace
$F(V)$.

\bigskip

By definition a quadratic form on the vector space $V$ is an element $Q \in \Sym^2(V^*)$. Equivalently,
see e.g. \cite{Bo} section 23.5, a quadratic form can be seen as a $k$-valued function on $V$ such that
$$ Q(av + bw) = a^2 Q(v) + b^2 Q(w) + ab \beta(v,w) \qquad a,b \in k; v,w \in V, $$
where $\beta$ is a bilinear form on $V$. The bilinear form $\beta$ associated to a quadratic form $Q$ 
is determined by the formula
$$ \beta(v,w) = Q(v + w) + Q(v) + Q(w). $$
Is is clear that $\beta$ is symmetric, hence $\beta$ can be regarded as an element in
$\Sbb_2(V^*)$. The assignment $Q \mapsto \beta$ gives rise to the $k$-linear {\em polarisation map}
$$ \PP : \Sym^2(V^*) \longrightarrow \Sbb_2(V^*), \qquad Q \mapsto \beta. $$
Over a field $k$ of characteristic different from $2$ we recall that the polarisation map $\PP$ is an isomorphism. In our
situation one can easily work out that
$$ \ker \  \PP = F(V^*) \qquad \text{and} \qquad  \coker  \ \PP = F (V^*). $$
Quadratic forms $Q \in \ker \ \PP$ correspond to squares of linear forms on $V$ and the last equality asserts that
the bilinear form $\beta$ is alternating, i.e., $\beta(v,v) = 0$ for all $v \in V$.

\bigskip

We say that $Q$ is {\em non-degenerate} if $\beta$ induces an isomorphism $\tilde{\beta} : V \ra V^*$ for $n$
even, and if $\dim \ker \tilde{\beta} = 1$ and $Q_{| \ker \tilde{\beta}} \not= 0$ for $n$ odd. 
We say that a linear subspace $W \subset V$ is {\em isotropic} for $Q$ if $Q_{|W} = 0$.

\section{Orthogonal groups and their Lie algebras in characteristic two}

In this section we work out in detail the structure of the Lie algebras of the orthogonal groups
$\SO(7)$ and $\SO(8)$ over a field $k$ of characteristic two, as well as of some of their parabolic subgroups
$P \subset \SO(n)$. Following the principle of \cite{H}, we describe the representation 
$\mathfrak{so}(n)/\mathfrak{p}$ of the Levi subgroup $L \subset P$, which we will use later in section 5. The main reference is \cite{Bo} section 23.6. 

\bigskip

By definition the orthogonal group $\OO(n)$ is the subgroup of $\GL(n) = \GL(V)$ 
stabilizing a non-degenerate quadratic form $Q \in \Sym^2(V^*)$. Note that in characteristic two $\OO(n)
\subset \SL(n)$. We also recall that, if $n$ is odd, the group $\OO(n)$ is connected. If $n$ is even,
$\OO(n)$ has two connected components distinguished by the Dickson invariant (\cite{D} page 301). In
order to keep the same notation as in characteristic $\not= 2$, we denote by $\SO(n)$ the 
connected component of $\OO(n)$ containing the identity.

\subsection{The group $\SO(8)$}

We consider $V = k^8$ endowed with the non-degenerate quadratic form
$$ Q = x_1 x_7 + x_2 x_8 + x_3 x_5 + x_4 x_6 \in \Sym^2(V^*).$$
We choose this non-standard quadratic form in order to obtain a simple
description of the Lie algebra $\mathfrak{p}$ of the parabolic subgroup $P$ fixing an isotropic
$2$-plane, which will appear in section 5. The bilinear form $\beta$ associated to $Q$ is 
given by the matrix
$$ \left(
\begin{array}{cccc}
	 O & O & O & I \\
	 O & O & I & O \\
	 O & I & O & O \\
	 I & O & O & O \\
\end{array} \right),
$$
where $I$ denotes the identity matrix $\scriptsize{ \left( 
\begin{array}{cc}
	1 & 0  \\ 0 & 1 
\end{array}  \right)}$ and $O$ the zero $2 \times 2$ matrix. A straightforward computation shows that the Lie algebra $\mathfrak{so}(8)
\subset \End(V)$ of $\SO(8)$ consists of the matrices of the form
\begin{equation} \label{so8}
\left(
\begin{array}{cccc}
	 X_1 & X_2 & X_3 & D_1 \\
	 X_4 & X_5 & D_2 & {}^tX_3 \\
	 X_6 & D_3 & {}^tX_5 & {}^tX_2 \\
	 D_4 & {}^tX_6 & {}^tX_4 & {}^tX_1 \\
\end{array} \right),
\end{equation}
where the $X_i$ are $2 \times 2$ matrices, the ${}^tX_i$ denote their transpose, and 
the $D_i$ denote matrices of the form 
$\scriptsize{ \left( 
\begin{array}{cc}
	0 & \lambda  \\ \lambda & 0 
\end{array}  \right)}$.

We consider the maximal parabolic subgroup $P \subset \SO(8)$ preserving the isotropic $2$-plane
$W \subset V = k^8$ defined by the equations $x_i = 0$ for $i=3,\ldots, 8$. Its orthogonal space
$W^\perp$ is then defined by the equations $x_7 = x_8 = 0$. The Levi subgroup $L \subset P$ is 
isomorphic to $\GL(2) \times \SO(4)$. The Lie algebra $\mathfrak{p}$ of $P$
consists of the matrices \eqref{so8} that satisfy the conditions
$$ X_4 = X_6 = 0 \qquad \text{and} \qquad D_4 = 0. $$
The vector space $\mathfrak{so}(8)/ \mathfrak{p}$ sits naturally in the exact sequence
\begin{equation} \label{so8modp}
0 \lra \Hom(W, W^\perp/W) \lra \mathfrak{so}(8)/ \mathfrak{p} \lra \DD \lra 0, 
\end{equation}
where $\DD$ is the one-dimensional subspace generated by the element $e \otimes f + f \otimes e \in W^* \otimes W^* = 
\Hom(W,W^*) \cong \Hom(W, V/W^\perp)$, where $e$ and $f$ are linearly independent.

\subsection{The group $\SO(7)$}

We consider $V = k^7$ endowed with the non-degenerate quadratic form
$$ Q = x_1 x_6 + x_2 x_7 + x_3 x_4 + x_5^2 \in \Sym^2(V^*).$$
The bilinear form $\beta$ associated to $Q$ is given by the $7 \times 7$ matrix
$$ \left(
\begin{array}{cccc}
	 O & O & 0 & I \\
	 O & I & 0  & O \\
	 0 & 0 & 0 & 0 \\
	 I & O & 0  & O \\
\end{array} \right).
$$
The radical $\ker \tilde{\beta}$ of $Q$ is generated by $e_5$. A straightforward computation shows that the
Lie algebra $\mathfrak{so}(7) \subset \End(V)$ of $\SO(7)$ consists of the matrices of the form
\begin{equation} \label{so7}
\left(
\begin{array}{cccc}
	 X_1 & X_2 & 0 & D_1 \\
	 X_3 & \Delta & 0  & {}^tX_2 \\
	 x_1 & x_2 & 0 & x_3 \\
	 D_2 & {}^tX_3 & 0  & {}^tX_1 \\
\end{array} \right),
\end{equation}
where the $x_i$ are $2 \times 1$ matrices and $\Delta$ is a diagonal matrix of the form
$\scriptsize{ \left( 
\begin{array}{cc}
	\lambda & 0  \\ 0 & \lambda 
\end{array}  \right)}$.
We consider the maximal parabolic subgroup $P \subset \SO(7)$ preserving the isotropic $2$-plane
$W \subset V = k^7$ defined by the equations $x_i = 0$ for $i=3,\ldots, 7$. Its orthogonal space
$W^\perp$ is then defined by the equations $x_6 = x_7 = 0$. The Levi subgroup $L \subset P$ is
isomorphic to $\GL(2) \times \SO(3)$. The Lie algebra $\mathfrak{p}$ of $P$
consists of the matrices \eqref{so7} that satisfy the conditions
$$ X_3 = 0, \qquad x_1 = 0, \qquad \text{and} \qquad D_2 = 0.$$
As above we have an exact sequence
\begin{equation} \label{so7modp}
0 \lra \Hom(W, W^\perp/W) \lra \mathfrak{so}(7)/ \mathfrak{p} \lra \DD \lra 0. 
\end{equation}

\section{Orthogonal bundles}

By definition an {\em orthogonal bundle} over a smooth projective curve $X$ defined over an algebraically closed field
$k$ is a principal $\OO(n)$-bundle over $X$, which we denote by $E_{\OO(n)}$. Equivalently a $\OO(n)$-bundle
corresponds to a pair $(\EE,Q)$, where $\EE$ is the underlying rank-$n$ vector
bundle and $Q$ is a non-degenerate $\cO_X$-valued quadratic form on $\EE$, i.e., a non-degenerate
element $Q \in H^0(X, \Sym^2(\EE^*))$. By abuse of notation we will write $E_{\OO(n)} = 
(\EE,Q)$.

\bigskip

One observes (see e.g. \cite{Ram} Definition 4.1) that the notion of (semi-)stability of the $\OO(n)$-bundle
$E_{\OO(n)}$ translates into the following condition on the pair $(\EE,Q)$: we say that $(\EE,Q)$ is
semi-stable (resp. stable) as an orthogonal bundle if and only if 
$$ \mu (\cF) := \frac{\deg \cF}{\mathrm{rk} \cF} \leq 0 \qquad (\text{resp.} < 0) $$
for all isotropic subbundles $\cF \subset \EE$. 

\bigskip

A noteworthy result is the following

\begin{prop}[\cite{Ram} Proposition 4.2]
Assume that the characteristic of $k$ is different from two. The pair $(\EE,Q)$ is semi-stable as
an orthogonal bundle if and only if the underlying vector bundle $\EE$ is semi-stable.
\end{prop}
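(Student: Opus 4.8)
The strategy is to prove the two implications separately, and for each one to use the contrapositive together with the characteristic-zero fact that the polarisation map $\PP$ is an isomorphism, so that isotropic subbundles for $Q$ and $\beta$-isotropic subbundles coincide and $Q$-orthogonality agrees with $\beta$-orthogonality.

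First I would show that semistability of $\EE$ implies semistability of $(\EE,Q)$: if $\cF \subset \EE$ is an isotropic subbundle, then semistability of $\EE$ gives $\mu(\cF) \leq \mu(\EE)$, and since the quadratic form $Q$ identifies $\EE$ with $\EE^*$ (non-degeneracy, available in characteristic $\neq 2$ because $\PP$ is an isomorphism) we have $\deg \EE = 0$, hence $\mu(\EE) = 0$ and $\mu(\cF) \leq 0$, which is exactly the semistability condition for the orthogonal bundle.

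The reverse implication is the substantive one. Assume $(\EE,Q)$ is semistable as an orthogonal bundle but $\EE$ is not semistable as a vector bundle; let $\cG \subset \EE$ be the maximal destabilising subbundle, so $\mu(\cG) > 0$ and $\cG$ is itself semistable. The plan is to produce from $\cG$ an \emph{isotropic} subbundle of positive slope, contradicting orthogonal semistability. Consider the restriction $Q|_{\cG}$, a section of $\Sym^2(\cG^*)$, and the map $\cG \to \cG^*$ induced by the associated bilinear form $\beta|_{\cG}$; equivalently look at the composite $\cG \hookrightarrow \EE \xrightarrow{\tilde\beta} \EE^* \twoheadrightarrow \cG^*$. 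Let $\cK$ be the subbundle of $\cG$ generically generated by the kernel of this composite (the saturation of the kernel). Since $\mu(\cK) \geq \mu(\cG) - (\text{something making } \cG/\cK \text{ embed in } \cG^*)$: more precisely, $\cG/\cK$ injects (generically) into $\cG^*$, and because $\cG$ is semistable so is $\cG^*$ with slope $-\mu(\cG) < 0$, forcing $\cG/\cK$ to have slope $\leq -\mu(\cG) < 0$; by additivity of slope this forces $\mu(\cK) > \mu(\cG) > 0$ unless $\cK = \cG$. In either case we obtain a subbundle of positive slope on which $\beta$ restricts to zero. Finally, $\beta|_{\cK} = 0$ together with the characteristic-$\neq 2$ identity $Q(v) = \tfrac12 \beta(v,v)$ shows $Q|_{\cK} = 0$, i.e. $\cK$ is isotropic with $\mu(\cK) > 0$, contradicting the assumed semistability of $(\EE,Q)$. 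Running the argument with strict inequalities throughout handles the stable case.

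The main obstacle is the bookkeeping in the reverse direction: one must be careful that ``isotropic subbundle'' in the definition means a \emph{subbundle} (saturated subsheaf), so passing from the kernel sheaf of $\cG \to \cG^*$ to its saturation, and controlling how this affects the slope, is where the real work lies; this is precisely the place where the characteristic hypothesis enters, since in characteristic two $\beta|_{\cK}=0$ no longer forces $Q|_{\cK}=0$, which is exactly why the proposition fails there and why the counterexamples of this note exist.
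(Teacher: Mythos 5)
The paper does not actually prove this statement --- it is quoted from Ramanan \cite{Ram}, Proposition 4.2, with no in-text argument --- so there is nothing internal to compare against; I can only assess your proof on its own terms, and it is correct. The easy direction is fine: non-degeneracy gives $\EE \cong \EE^*$, hence $\mu(\EE)=0$, and semistability of $\EE$ then bounds every subbundle, isotropic or not, by slope $0$. For the converse, your detour through the saturated kernel $\mathcal{K}$ of $\cG \to \cG^*$ is sound but can be collapsed: since $\cG$ (the maximal destabilizing subbundle) is semistable of slope $\mu(\cG)>0$ and $\cG^*$ is semistable of slope $-\mu(\cG)<0$, one has $\Hom(\cG,\cG^*)=0$ outright, so the composite $\cG \hookrightarrow \EE \xrightarrow{\tilde{\beta}} \EE^* \twoheadrightarrow \cG^*$ vanishes and $\cG$ itself is $\beta$-isotropic; your two-case analysis in fact reduces to the single case $\mathcal{K}=\cG$, because a nonzero subsheaf of the semistable $\cG$ cannot have slope exceeding $\mu(\cG)$, so the alternative $\mu(\mathcal{K})>\mu(\cG)$ is vacuous. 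The final step $Q(v)=\tfrac12\beta(v,v)$ is exactly where the characteristic hypothesis enters, as you say, and your closing remark is on target: in the paper's counterexample the destabilizing subbundle $F_*\cO_X \subset \cA$ is $\beta$-isotropic but not $Q$-isotropic (Proposition \ref{inclusionA} (iii)), which is precisely the failure mode your proof isolates.
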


In the next section we will show that the assumption on the characteristic can not be removed.

\section{Stable orthogonal bundles with unstable underlying bundle}

We denote by $F : X \ra X$ the absolute Frobenius of the curve $X$ of genus $g \geq 2$ defined over an algebraically closed field $k$
of characteristic two. We denote by $K_X$ the canonical line bundle of $X$. 

\subsection{An orthogonal rank-$3$ bundle}

Let $E$ be a rank-$2$ vector bundle of arbitrary degree over $X$. The determinant endows the bundle $\End_0(E)$ 
of traceless endomorphisms of $E$ with an $\cO_X$-valued non-degenerate quadratic form
$$ \det : \End_0(E) \lra \cO_X.$$
Its associated bilinear form $\beta$ is given by $\beta(v,w) = \mathrm{Tr}(v \circ w)$ for $u,v$ local sections of
$\End_0(E)$ and the radical of $\det$ equals
the line subbundle $\cO_X \hookrightarrow \End_0(E)$ of homotheties.
\begin{lem}
There is an exact sequence
\begin{equation} \label{esend}
 0 \lra \cO_X \lra \End_0(E) \lra F^*(E) \otimes (\det E)^{-1} \lra 0, 
\end{equation} 
where the first homomorphism is the inclusion of homotheties into $\End_0(E)$. 
\end{lem}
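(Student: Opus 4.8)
The plan is to realise both the sub-line-bundle $\cO_X \hra \End_0(E)$ and the quotient $\End_0(E)/\cO_X$ intrinsically inside $\End(E) = E \otimes E^*$, exploiting the fact that for $\rk E = 2$ one has the canonical isomorphism $E^* \cong E \otimes (\det E)^{-1}$ coming from the wedge pairing $E \otimes E \to \Lambda^2 E = \det E$, and hence
$$ \End(E) \;=\; E \otimes E^* \;\cong\; E \otimes E \otimes (\det E)^{-1}. $$
Under this isomorphism the trace morphism $\End(E) \to \cO_X$ corresponds to the morphism induced by the wedge pairing, i.e. to the composite $E \otimes E \otimes (\det E)^{-1} \to \Lambda^2 E \otimes (\det E)^{-1} = \cO_X$; since the kernel of the canonical projection $E \otimes E \to \Lambda^2 E$ is the $\sigma$-invariant subbundle $\Sbb_2(E)$ of Section~1, this identifies $\End_0(E) = \ker(\mathrm{Tr})$ with $\Sbb_2(E) \otimes (\det E)^{-1}$.

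It then remains to describe $\Sbb_2(E)$. First I would globalise to $E$ the linear-algebra exact sequences of Section~1: the Frobenius twist, the symmetric and exterior squares and the $\sigma$-invariant subspace all commute with base change, so the associated morphisms of locally free sheaves on $X$ are locally split, and one obtains the subbundle filtration $A_2(E) \subset \Sbb_2(E) \subset E \otimes E$ with $A_2(E) := \ker(E \otimes E \to \Sym^2 E)$, together with the short exact sequence
$$ 0 \lra A_2(E) \lra \Sbb_2(E) \lra F^*E \lra 0 $$
which expresses the fact, recalled in Section~1, that the image of $\Sbb_2$ under the projection onto $\Sym^2$ is exactly the Frobenius twist (in characteristic two one has $A_2(E) \subset \Sbb_2(E)$, so the kernel of $\Sbb_2(E) \to \Sym^2 E$ is precisely $A_2(E)$). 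Tensoring this sequence by $(\det E)^{-1}$ and using the identification of the previous paragraph yields
$$ 0 \lra A_2(E) \otimes (\det E)^{-1} \lra \End_0(E) \lra F^*E \otimes (\det E)^{-1} \lra 0. $$

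To finish I would identify the left-hand term with the line bundle of homotheties. In a local frame $e_1, e_2$ of $E$ a traceless endomorphism has matrix $\begin{pmatrix} a & b \\ c & a \end{pmatrix}$, and a short computation shows that it corresponds, modulo $A_2(E)\otimes(\det E)^{-1}$, to $\bigl(b\,(e_1\!\cdot\!e_1) + c\,(e_2\!\cdot\!e_2)\bigr)\otimes(e_1\wedge e_2)^{-1}$, while $\mathrm{id}_E$ corresponds to $(e_1\otimes e_2 + e_2\otimes e_1)\otimes(e_1\wedge e_2)^{-1}$, a local generator of $A_2(E)\otimes(\det E)^{-1}$. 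Being a nowhere-vanishing global section of this sub-line-bundle, $\mathrm{id}_E$ trivialises it; hence $A_2(E)\otimes(\det E)^{-1} = \cO_X\cdot\mathrm{id}_E$, the first arrow of the sequence is $\lambda \mapsto \lambda\,\mathrm{id}_E$, and \eqref{esend} follows.

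The only genuinely delicate point is the bookkeeping in the first and third paragraphs: checking that the trace morphism really corresponds to the wedge-induced morphism, and that $\mathrm{id}_E$ lands in (and generates) the sub-line-bundle $A_2(E)\otimes(\det E)^{-1}$ rather than some other complement of $F^*E \otimes (\det E)^{-1}$ in $\End_0(E)$. In characteristic two these are exactly the two facts that force the homotheties into $\End_0(E)$ and make the quotient come out as the Frobenius twist; both are settled once and for all by the matrix computation in a local frame, after which frame-independence is automatic since the whole description is intrinsic. An alternative would be to define the quotient map directly in a local frame by $\begin{pmatrix} a & b \\ c & a \end{pmatrix} \mapsto \bigl(b\,(e_1\!\cdot\!e_1) + c\,(e_2\!\cdot\!e_2)\bigr)\otimes(e_1\wedge e_2)^{-1}$ and to check compatibility with changes of frame directly, but the route above makes the gluing unnecessary.
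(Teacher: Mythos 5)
Your proof is correct, but it takes a different route from the paper's. The paper argues representation-theoretically: it sets $V = k^2$, $W = \End_0(V)$, notes that conjugation gives homomorphisms $\GL(V) \to \SO(W) \to \GL(W/\langle \mathrm{Id}\rangle)$ fixing the line $\langle\mathrm{Id}\rangle$ (which lies in $\End_0$ only because $\mathrm{Tr}(\mathrm{Id}) = 2 = 0$), and then computes that the quotient representation is $g \mapsto \frac{1}{\det g}F(g)$; the exact sequence of bundles is then the associated-bundle sequence of this exact sequence of $\GL(2)$-representations. You instead build the maps canonically at the level of sheaves, via $\End(E) \cong E \otimes E \otimes (\det E)^{-1}$, the identification of the trace with the wedge pairing, and the Section~1 sequence $0 \to A_2 \to \Sbb_2 \to F(\cdot) \to 0$ relating symmetric tensors to the Frobenius twist. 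The two computations are the same at heart (both ultimately identify a two-dimensional quotient of $\End_0(k^2)$ with the Frobenius twist of $k^2$ twisted by $\det^{-1}$), but the paper's packaging is shorter and fits the principal-bundle formalism used throughout, while yours has the merit of exhibiting both arrows of \eqref{esend} as canonical tensor operations without any appeal to equivariance or gluing; your local-frame checks (trace $=$ wedge pairing, $\mathrm{id}_E$ generating $A_2(E)\otimes(\det E)^{-1}$) are correct and suffice.
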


\begin{proof}
Consider $V = k^2$ and $W = \End_0(V)$ equipped with the determinant. Note that for any $g \in \GL(V)$ the conjugation
$C_g : W \ra W$ is orthogonal and leaves the identity $\mathrm{Id}$ invariant. Thus we obtain group 
homomorphisms $\GL(V) \ra \SO(W) \ra \GL(W/ \langle \mathrm{Id} \rangle)$, $ g \mapsto C_g \mapsto \overline{C}_g$. A
straightforward computation shows that the composite map is given by $g \mapsto \frac{1}{\det g} F(g)$, where
$F(g)$ is the Frobenius-twist of $g$.
\end{proof}

\begin{prop} \label{stability}
If $E$ is stable, then $(\End_0(E), \det)$ is stable as an orthogonal bundle.
\end{prop}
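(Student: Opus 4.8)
The plan is to characterize, in terms of the underlying rank-$2$ bundle $E$, what an isotropic subbundle of $(\End_0(E),\det)$ looks like, and then to rule out isotropic subbundles of positive degree when $E$ is stable. Recall that a subbundle $\cF \subset \End_0(E)$ is isotropic precisely when $\det$ vanishes identically on $\cF$, i.e. every local section of $\cF$ is a nilpotent traceless endomorphism of $E$. Since we are in characteristic two, a traceless endomorphism $v$ of a rank-$2$ bundle satisfies $v^2 = \det(v)\cdot \mathrm{Id}$ (Cayley--Hamilton), so isotropy of $\cF$ is equivalent to $v^2 = 0$ for all local sections $v$ of $\cF$. By the stability hypothesis we must show $\mu(\cF) < 0$ for every such $\cF$.

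The key step is to analyze the possible ranks. If $\cF$ has rank $1$, then $\cF$ is generated locally by a single nilpotent traceless endomorphism $v$; its kernel (equivalently image, since $v^2=0$) is a line subbundle $M \subset E$, and $v$ factors as $E \twoheadrightarrow E/M \to M \hookrightarrow E$, so $\cF \cong \Hom(E/M, M) = M^2 \otimes (\det E)^{-1}$. Stability of $E$ forces $\deg M < \tfrac12 \deg E$, hence $\deg \cF = 2\deg M - \deg E < 0$, as required. If $\cF$ has rank $2$, then at a general point the fiber is a $2$-dimensional space of commuting nilpotent elements of $\End_0(V)$ with $V=k^2$; but the nilpotent cone in $\End_0(V)$ is a quadric cone whose only linear subspaces are the lines through the origin, so no $2$-dimensional isotropic subspace exists, and rank-$2$ isotropic subbundles do not occur. (Rank $3$ is impossible since then $\cF = \End_0(E)$ is not isotropic, as $\det$ is non-degenerate.) This exhausts all cases and proves that every isotropic subbundle has strictly negative slope.

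The step I expect to require the most care is pinning down the rank-$1$ description $\cF \cong M^2 \otimes (\det E)^{-1}$ at the level of subbundles rather than just generically: one must check that the line subbundle $M \subset E$ cut out fiberwise by $\ker v$ is actually a subbundle (i.e. the quotient $E/M$ is locally free), which follows because $v$ is a section of the bundle $\cF$ and hence has locally free kernel and image after saturating, and then verify that the saturation does not change the slope estimate in the wrong direction — in fact saturating only increases $\deg M$, but the bound $\deg M < \tfrac12\deg E$ from stability already handles the saturated $M$, so the inequality is safe. Once this is in place, the three rank cases combine immediately with the stability criterion recalled in Section 4 to give that $(\End_0(E),\det)$ is a stable $\SO(3)$-bundle.
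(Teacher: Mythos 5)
Your proof is correct and follows essentially the same route as the paper's: both identify an isotropic line subbundle of $(\End_0(E),\det)$ with a nowhere-vanishing nilpotent (rank-one) twisted endomorphism of $E$ and then invoke stability of $E$ to force its degree to be negative. The differences are cosmetic --- you read off the degree from $\ker v=\im v$, while the paper bounds $\deg(\im \phi)$ from both sides using stability twice --- together with your explicit exclusion of rank-$2$ isotropic subbundles, which the paper leaves implicit since a non-degenerate quadratic form on a $3$-dimensional space has Witt index $1$.
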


\begin{proof}
It suffices to observe that an isotropic line subbundle $M$ of $\End_0(E)$ corresponds to
a nonzero homomorphism $\phi: E \ra E \otimes M^{-1}$ of rank $1$. If $L$ denotes
the line bundle $\im \phi \subset E \otimes M^{-1}$, we obtain the inequalities
$\mu(E) < \deg L < \mu( E \otimes M^{-1})$, which implies $\deg M < 0$.
\end{proof}

\bigskip

Let $B$ denote the sheaf of locally exact differentials \cite{R}, which can be defined as the cokernel 
\begin{equation} \label{extfrob}
0 \lra \cO_X \lra F_*\cO_X \lra B \lra 0,
\end{equation}
of the inclusion $\cO_X \hookrightarrow F_*\cO_X$ given by the Frobenius map. Note that in characteristic $2$ 
the sheaf $B$ is a theta-characteristic, i.e., $B^2 = K_X$. We denote by $e \in \Ext^1(B, \cO_X) = 
H^1(X, B^{-1})$  the non-zero extension class (unique up to a scalar) determined by the exact 
sequence \eqref{extfrob}. The next lemma immediately follows from the definition of $e$ (see also \cite{R}).

\begin{lem} \label{lemma1}
The extension class $e$ generates the one-dimensional kernel of the Frobenius map, i.e.,
$$ \langle e \rangle = \ker \left( F: H^1(X, B^{-1}) \longrightarrow H^1(X, K_X^{-1}) \right).$$
In particular, we have a direct sum
$$ F^* \left( F_* \cO_X \right) = \cO_X \oplus K_X. $$
\end{lem}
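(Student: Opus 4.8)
The plan is to prove both assertions of Lemma \ref{lemma1} starting from the defining sequence \eqref{extfrob} and the standard properties of Frobenius push-forward. For the first statement, I would contemplate the commutative diagram relating the sequence \eqref{extfrob} to its Frobenius pull-back. Applying $F^*$ to \eqref{extfrob} gives an exact sequence $0 \to \cO_X \to F^*F_*\cO_X \to F^*B \to 0$, and since $B$ is a theta-characteristic ($B^2 = K_X$) we have $F^*B = B^2 = K_X$. The extension class of this pulled-back sequence is, essentially by functoriality, the image of $e$ under the natural map $F^* : H^1(X,B^{-1}) \to H^1(X, (F^*B)^{-1}) = H^1(X,K_X^{-1})$ induced by Frobenius on cohomology (this is the Frobenius-linear map on $H^1$, or equivalently pullback of the relevant Cech cocycle). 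So the content of the first assertion is that this pulled-back sequence splits, i.e. $F^*F_*\cO_X \cong \cO_X \oplus K_X$; and conversely if it splits then $F^*(e)=0$. Since $e$ spans $\Ext^1(B,\cO_X) = H^1(X,B^{-1})$ minus the origin is not the point --- the group could a priori be higher-dimensional --- one needs: $e\neq 0$ (given), and the kernel of $F$ on $H^1(X,B^{-1})$ is at most one-dimensional, so $\langle e\rangle$ is exactly that kernel once we know $F(e)=0$.

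The heart of the matter is therefore the splitting $F^*(F_*\cO_X) = \cO_X \oplus K_X$, equivalently $F^*(e)=0$. For this I would argue as follows. There is a canonical trace/evaluation map: by adjunction, $F^*F_*\cO_X \to \cO_X$ (the counit), which splits the inclusion $\cO_X \hookrightarrow F^*F_*\cO_X$ coming from pulling back $\cO_X \hookrightarrow F_*\cO_X$? One must be careful --- it is rather the unit $\cO_X \to F_*F^*\cO_X$ that is canonical, and $F_*\cO_X$ already contains $\cO_X$. The cleanest route: the relative Frobenius of $X$ over $k$ (perfect field) is $X \to X$ itself; $F_*\cO_X$ is locally free of rank $p = 2$; and there is a canonical filtration or, in our setting, one computes $F^*F_*\cO_X$ directly. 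Concretely, in a local coordinate $t$, $F_*\cO_X$ has local basis $1, t$ (as an $\cO_X$-module via $t \mapsto t^2$ on the base), and pulling back replaces the base variable by its square, so $F^*F_*\cO_X$ has local basis $1, t$ with the source structure; the transition functions of $F_*\cO_X$ computed from a coordinate change $t \mapsto s(t)$ produce, after pull-back, a matrix that is upper-triangular with $1$ and $(ds/dt)^{?}$ on the diagonal and whose off-diagonal entry becomes a $p$-th power, hence trivializable. This is exactly the classical computation (e.g. in \cite{R}); I would invoke it rather than reproduce it. The upshot is a canonical splitting whose two summands are $\cO_X$ and the line bundle $\det(F_*\cO_X)^{?}$ --- and a degree count via \eqref{extfrob} gives $\deg F_*\cO_X = \deg B = g-1$ (using Riemann--Roch for $F_*$, or $\deg B = \frac12 \deg K_X$), so the complementary summand has degree $2(g-1) = \deg K_X$; identifying it as $K_X$ (not merely a degree-$2g-2$ bundle) uses that $F^*B = B^2 = K_X$ together with the sub-line-bundle $\cO_X \subset F^*F_*\cO_X$ mapping isomorphically to the $\cO_X$ in the sequence, forcing the quotient to be $F^*B$.

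Assembling: the direct-sum decomposition $F^*(F_*\cO_X) = \cO_X \oplus K_X$ shows the sequence $0 \to \cO_X \to F^*F_*\cO_X \to K_X \to 0$ splits, hence $F^*(e) = 0$ in $H^1(X,K_X^{-1})$, giving $\langle e \rangle \subseteq \ker(F)$. For the reverse inclusion I would note that the Frobenius-linear map $F : H^1(X,B^{-1}) \to H^1(X,K_X^{-1})$ is, by Serre duality, dual to the Cartier-type map $H^0(X,K_X \otimes B) = H^0(X,K_X^{3/2}\cdot \ldots)$ --- more simply, it is a classical fact (see \cite{R}) that the $p$-linear Frobenius on $H^1$ of a line bundle of the form $B^{-1}$ with $B^2 = K_X$ has a one-dimensional kernel, spanned precisely by the Frobenius-destabilizing class; this is the input that identifies the kernel with $\langle e\rangle$. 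The main obstacle is the explicit local computation showing $F^*F_*\cO_X$ splits and pinning down the complementary summand as $K_X$ on the nose; everything else is formal diagram-chasing and dimension bookkeeping. Since the excerpt already cites \cite{R} for exactly these facts, in the write-up I would lean on that reference for the local splitting computation and keep the argument here at the level of the functoriality diagram plus the degree/identification check.
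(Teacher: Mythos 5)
Your argument reaches the right conclusions, but it takes a genuinely different and more laborious route than the one the paper intends, and it outsources both substantive points to \cite{R}. The paper's proof (left implicit in ``immediately follows from the definition of $e$'') is a single long-exact-sequence computation that delivers both halves of the lemma at once: tensor \eqref{extfrob} by $B^{-1}$ and apply the projection formula to get
$$ 0 \lra B^{-1} \lra F_*(K_X^{-1}) \lra \cO_X \lra 0, $$
whose first arrow is the unit $B^{-1} \to F_*F^*B^{-1}$; since $F$ is affine, the induced map $H^1(X,B^{-1}) \to H^1(X,F_*K_X^{-1}) = H^1(X,K_X^{-1})$ is exactly the Frobenius map of the statement, so the cohomology sequence $H^0(X,\cO_X) \xrightarrow{\ \cup\, e\ } H^1(X,B^{-1}) \xrightarrow{\ F\ } H^1(X,K_X^{-1})$ gives $\ker F = \im(\cup\, e) = \langle e \rangle$ (using $e \neq 0$, which holds since $\Hom(B,F_*\cO_X) = \Hom(K_X,\cO_X) = 0$), and the splitting of $F^*(F_*\cO_X)$ then follows because $F(e)$ is the class of the pulled-back extension. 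You instead prove the two inclusions separately: $F(e)=0$ via an explicit splitting of $F^*F_*\cO_X$, and $\dim \ker F \leq 1$ by citation. Three remarks on your version. First, the adjunction route you half-abandon actually works and is the cleanest path to the splitting: the triangle identity $\epsilon_{F^*\cO_X} \circ F^*(\eta_{\cO_X}) = \mathrm{id}$ says precisely that the counit $F^*F_*\cO_X \to \cO_X$ retracts the pulled-back inclusion, so $F^*F_*\cO_X = \cO_X \oplus \ker \epsilon$ with $\ker \epsilon \cong F^*B = K_X$, with no local computation or degree count needed. Second, the local argument as you state it (``the off-diagonal entry becomes a $p$-th power, hence trivializable'') is not by itself a proof: a \v{C}ech cocycle with values in squares is the Frobenius image of a cocycle, not automatically a coboundary, so that step genuinely rests on the reference. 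Third, the assertion that $\ker F$ is ``spanned precisely by the Frobenius-destabilizing class'' is the first assertion of the lemma itself; citing it from \cite{R} is defensible (the paper does the same), but as written your proof of that half is a citation rather than an argument, whereas the projection-formula computation above makes the whole lemma self-contained in a few lines.
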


\bigskip

Let $L$ be a line bundle of arbitrary degree. Then the rank-$2$ vector bundle $F_* L$
has determinant equal to $B \otimes L$, is stable and is destabilized by pull-back
by the Frobenius map $F$ (\cite{LP}). More precisely, the bundle $F^*( F_* L) \otimes B^{-1} \otimes L^{-1}$ is the 
unique non-split extension  (see e.g. \cite{LS})
\begin{equation} \label{Gunningbundle}
 0 \longrightarrow B \stackrel{\iota}{\longrightarrow} F^* ( F_* L) \otimes B^{-1} \otimes L^{-1} \stackrel{\pi}{\longrightarrow} B^{-1}
\longrightarrow 0.
\end{equation}  

\begin{prop} \label{inclusionA}
\begin{itemize}
\item[(i)]
The vector bundle $\cA := \End_0(F_*L)$ does not depend on $L$. 
\item[(ii)]
The vector bundle $\cA$ is the unique non-split extension
$$ 0 \lra F_*\cO_X \stackrel{\phi}{\lra} \cA \stackrel{\psi}{\lra} B^{-1} \lra 0.$$
We denote by $\phi$ and $\psi$ generators of the one-dimensional spaces $\Hom(F_*\cO_X, \cA)$ and
$\Hom(\cA, B^{-1})$.
\item[(iii)]
The restriction of the quadratic form $\det$ to the subbundle $F_*\cO_X$ equals the 
evaluation morphism $F^* F_* \cO_X \longrightarrow \cO_X$. In particular the restriction of $\beta$
to $F_*\cO_X$ is identically zero.
\item[(iv)]
Let $x$ and $y$ be local sections of $F_*\cO_X$ and $\cA$. Then the two bilinear forms $\beta(\phi(x),y)$ and 
$\langle x, \psi(y) \rangle$ on the product $F_*\cO_X \times \cA$ differ by a non-zero multiplicative scalar. 
Here $\langle .,.\rangle$
denotes the standard pairing between $F_* \cO_X$ and its dual $(F_* \cO_X)^*$. 
\end{itemize}
\end{prop}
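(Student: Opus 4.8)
The plan is to prove the four assertions of Proposition \ref{inclusionA} in order, using the identification of $\cA = \End_0(F_*L)$ via the exact sequence \eqref{esend} together with Lemma \ref{lemma1}.

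For (i), I would apply Lemma \ref{esend} with $E = F_*L$. Since $\det(F_*L) = B \otimes L$, the sequence becomes
$$0 \lra \cO_X \lra \cA \lra F^*(F_*L) \otimes B^{-1} \otimes L^{-1} \lra 0,$$
and by \eqref{Gunningbundle} the rightmost term is the unique non-split extension of $B^{-1}$ by $B$, hence visibly independent of $L$. To conclude that $\cA$ itself is independent of $L$ one must check that the extension class of the displayed sequence in $\Ext^1(F^*(F_*L)\otimes B^{-1}\otimes L^{-1}, \cO_X)$ is independent of $L$ as well; I would do this by noting that this $\Ext$-group is computed from the sequence \eqref{Gunningbundle} and that, since $F_*L$ is stable, $\cA$ has no homotheties-splitting, so the class is the canonical one determined intrinsically by $\cA$ and the line of homotheties. (Concretely: $\End_0(F_*L)$ and $\End_0(F_*L')$ both fit into \eqref{esend} with the same quotient and the same sub, and a dimension count of $\Ext^1$ pins the class down up to scalar.)

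For (ii), I would combine two extensions. On one hand, \eqref{esend} reads $0 \to \cO_X \to \cA \to Q \to 0$ with $Q := F^*(F_*L)\otimes B^{-1}\otimes L^{-1}$; on the other, pushing out \eqref{Gunningbundle} along $\iota: B \hra Q$ and pulling back \eqref{extfrob} appropriately, one produces a subsheaf of $\cA$ isomorphic to $F_*\cO_X$: indeed $F_*\cO_X$ is the non-split extension of $B$ by $\cO_X$ given by the class $e$ (by Lemma \ref{lemma1}), and $\cA$ restricted over the sub-extension $0 \to \cO_X \to \cdot \to B \to 0$ inside \eqref{esend} must carry exactly this class $e$, again because $e$ generates $\ker F$ on $H^1(X,B^{-1})$ and the extension \eqref{esend} is Frobenius-pulled-back from a split one by Lemma \ref{lemma1}. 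This gives the subbundle $\phi: F_*\cO_X \hra \cA$ with quotient $Q/B \cong B^{-1}$, i.e. the asserted sequence. Uniqueness (non-split) and one-dimensionality of $\Hom(F_*\cO_X,\cA)$ and $\Hom(\cA,B^{-1})$ follow from stability of $F_*L$ (hence of $\cA$ as an orthogonal bundle by Proposition \ref{stability}, and semistability as a vector bundle fails but the relevant slopes still force the Hom-spaces to be at most one-dimensional) by the usual slope estimates: $\mu(F_*\cO_X) = (g-1)/2 > 0 = \mu(B^{-1}) \cdot$ wait — rather, $\deg B^{-1} = -(g-1) < 0$ and any nonzero map $\cA \to B^{-1}$ must kill the positive-degree sub $F_*\cO_X$, so it factors through the quotient, giving dimension one; dually for $\Hom(F_*\cO_X,\cA)$.

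For (iii), recall from the discussion before Lemma \eqref{esend} that the line of homotheties $\cO_X \hra \End_0(F_*L)$ is precisely the radical of $\det$. Under the inclusion $\phi$, the sub $\cO_X \hra F_*\cO_X \hra \cA$ is this radical; so $\det$ restricted to $F_*\cO_X$ descends to a quadratic form on $F_*\cO_X/\cO_X = B$, valued in $\cO_X$ — but by the general remarks of Section 1, a nonzero quadratic form on a line bundle $B$ valued in $\cO_X$ is the same as a nonzero element of $\Hom(F(B), \cO_X) = \Hom(B^2, \cO_X) = \Hom(K_X,\cO_X)$, which forces it to vanish unless... hm, that would be zero. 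Instead the correct statement is that $\det|_{F_*\cO_X}$ is the composite of the squaring map $F_*\cO_X \to \Sym^2 F_*\cO_X$ followed by a map to $\cO_X$, i.e. it equals the \emph{evaluation} $F^*F_*\cO_X \to \cO_X$ under the identification of quadratic forms in $\ker\PP$ with elements of $F((F_*\cO_X)^*)$ from Section 1; I would verify this by a direct local computation, choosing a local trivialization of $F_*L$ in which a local generator of $F_*\cO_X \subset \End_0(F_*L)$ is a nilpotent (rank-one) endomorphism, whose determinant is $0$ — and then tracking the $\cO_X$-linear (not $\cO_X$-quadratic) failure, which is exactly the Frobenius-semilinear evaluation. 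Vanishing of $\beta|_{F_*\cO_X}$ is then immediate since $\beta = \PP(\det)$ and $\det|_{F_*\cO_X} \in \ker\PP$.

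For (iv), the point is that both bilinear forms $\beta(\phi(x),y)$ and $\langle x,\psi(y)\rangle$ are nonzero elements of $\Hom(F_*\cO_X \otimes \cA, \cO_X)$, and this space is one-dimensional. I would argue that $\beta(\phi(\cdot),\cdot)$ is nonzero because $\beta$ is non-degenerate on $\cA$ modulo its radical $\cO_X$, hence pairs $F_*\cO_X/\cO_X = B$ nontrivially against $\cA$; it kills $F_*\cO_X$ on the right by (iii), so it factors as $F_*\cO_X \otimes (\cA/F_*\cO_X) \to \cO_X$, i.e. through $\psi$, giving $\beta(\phi(x),y) = c\,\langle x,\psi(y)\rangle$ for some $c$. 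Nonvanishing of $c$ follows since $\beta$ is non-degenerate modulo the radical and $F_*\cO_X$ is not contained in the radical (its image in $\cA/\cO_X = Q$ is the full $B \subset Q$, on which $\beta$ induces the tautological pairing $B \otimes B^{-1} \to \cO_X$ coming from the symplectic form on $Q$ — here one uses that the radical of $\det$ on $\End_0$ is exactly the homotheties, no larger). This last identification — matching $\beta|_{\cA}$ with the natural pairing $B \otimes B^{-1} \to \cO_X$ on the quotient $Q = F^*(F_*L)\otimes B^{-1}\otimes L^{-1}$ — is the step I expect to require the most care, since it means unwinding how the orthogonal structure $\det$ on $\End_0(F_*L)$ interacts with the two-step filtration $\cO_X \subset F_*\cO_X \subset \cA$, and I would pin it down by the explicit local model for $\End_0$ of a rank-two bundle used already in the proof of Lemma \eqref{esend}.
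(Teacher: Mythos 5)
Your overall strategy is the paper's: use the sequence \eqref{esend} for $E=F_*L$, pull its class back along $\iota: B \hra F^*(F_*L)\otimes B^{-1}\otimes L^{-1}$, identify that pullback with the class $e$ of \eqref{extfrob} via Lemma \ref{lemma1} to produce $F_*\cO_X \hra \cA$, and then prove (iii) and (iv) by the alternating/radical arguments. But there are two genuine gaps. First, knowing that $F$ kills the pulled-back class $\iota^*(f)$ only places it in the one-dimensional space $\langle e\rangle = \ker F$; it does not exclude $\iota^*(f)=0$, in which case the subsheaf of $\cA$ sitting over $B$ would be $\cO_X\oplus B$ rather than $F_*\cO_X$ and the whole construction collapses. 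The paper closes this by contradiction: if $\iota^*(f)=0$ then $B$ is a line subbundle of $\End_0(F_*L)$, i.e.\ there is a nonzero map $F_*L\to F_*L\otimes B^{-1}$, which is impossible since $F_*L$ is stable and $\mu(F_*L\otimes B^{-1})=\mu(F_*L)+1-g<\mu(F_*L)$. Your proposal never addresses this nonvanishing.

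Second, your route to (i) does not work as stated. The group $\Ext^1\bigl(F^*(F_*L)\otimes B^{-1}\otimes L^{-1},\,\cO_X\bigr)=H^1(X,Q^*)$ for a rank-$2$, degree-$0$ bundle $Q$ has dimension at least $2g-2\geq 2$ by Riemann--Roch, so no dimension count there ``pins the class down up to scalar,'' and the claim that the class is ``the canonical one determined intrinsically by $\cA$'' presupposes what is to be proved. The paper instead derives (i) as a consequence of (ii): one computes $\Ext^1(B^{-1},F_*\cO_X)=H^1(X,(F_*\cO_X)\otimes B)\cong H^0(X,\cO_X)^*$ by Serre duality, hence there is a \emph{unique} non-split extension of $B^{-1}$ by $F_*\cO_X$, and one checks the extension class $a$ of $0\to F_*\cO_X\to\cA\to B^{-1}\to 0$ is nonzero because its push-out along $F_*\cO_X\to B$ is the non-split extension \eqref{Gunningbundle}. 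Your proposal supplies neither the Serre-duality computation (the slope estimates you invoke only bound the $\Hom$-spaces, not $\Ext^1$) nor the non-splitness of $a$ (which does not follow from stability of $F_*L$ by slopes alone, since $\mu(F_*L\otimes B)>\mu(F_*L)$). Parts (iii) and (iv), after your self-correction, do land on essentially the paper's arguments (homotheties map to $\mathrm{id}$, nilpotents to $0$; both pairings factor through $B\times B^{-1}$), though (iii) should be run in the paper's order: first $\beta|_{F_*\cO_X}=0$ because $\beta$ is alternating and factors through the line bundle quotient $B$, and only then does $\det|_{F_*\cO_X}$ lie in $\ker\PP=F^*(F_*\cO_X)^*$.
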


\begin{proof}
First we consider the exact sequence \eqref{esend} for the bundle $F_* L$
$$ 0 \lra \cO_X \lra \End_0(F_* L) \lra F^*(F_* L) \otimes B^{-1} \otimes L^{-1} \lra 0, $$
and call its extension class $f$. In order to show that there exists an 
inclusion $F_* \cO_X \hookrightarrow \End_0(F_* L)$, it is enough to show that $f$
pulls-back to $e$ under the inclusion $\iota : B \hookrightarrow F^* ( F_* L) \otimes B^{-1} \otimes L^{-1}$. 
Since by Lemma \ref{lemma1} the
extension class $e$ is characterized by the equality $F(e) = 0$ and since pull-back
commutes with Frobenius, it suffices to show that the extension 
$$F^* \End_0(F_* L) = \End_0(F^* F_* L) = \End_0(F^*( F_* L) \otimes B^{-1} \otimes L^{-1}) $$ 
contains the line subbundle $K_X = F^* B$ and that $\iota^*(f) \not= 0$. Since 
$F^* ( F_* L) \otimes B^{-1} \otimes L^{-1}$ is the non-split
extension \eqref{Gunningbundle}, we obtain a sheaf inclusion
\begin{equation} \label{nilpendo}
K_X = \Hom(B^{-1},B) \longrightarrow \End_0(F^* F_* L), \qquad \phi \mapsto \iota \circ \phi \circ \pi.
\end{equation}
Using a degree argument, one shows that there is an exact sequence 
\begin{equation} \label{es1}
0 \lra \cO_X \oplus K_X \lra \End_0(F^* F_* L) \lra \Hom( B, B^{-1}) = K_X^{-1} \lra 0.
\end{equation}
It remains to check that $\iota^*(f) \not= 0$. Suppose on  the contrary that $\iota^*(f) = 0$. Then
$B$ is a line subbundle of $\End_0(F_*L)$, hence we obtain a non-zero homomorphism
$F_*L \ra (F_*L) \otimes B^{-1}$. But $F_*L$ is stable and $\mu(F_*L) > \mu( F_*L \otimes B^{-1}) =
\mu (F_*L) +1 -g$, which implies $\Hom(F_*L,(F_*L) \otimes B^{-1}) = 0$, a contradiction.
Thus we have shown that $\End_0(F_*L)$ fits into the exact sequence
$$ 0 \lra F_*\cO_X \lra \End_0(F_*L) \lra B^{-1} \lra 0. $$
We denote by $a$ its extension class in $\mathrm{Ext}^1(B^{-1}, F_* \cO_X) = H^1(X, (F_* \cO_X) 
\otimes B)$. We have $(F_* \cO_X) \otimes B = (F_* \cO_X)^* \otimes K_X$ and by Serre duality
$H^1(X,(F_* \cO_X) \otimes B) = H^0(X, F_* \cO_X)^* = H^0(X, \cO_X)^*$, which implies that 
$\dim \mathrm{Ext}^1(B^{-1}, F_* \cO_X) = 1$. Hence there exists (up to isomorphism) a unique
non-split extension of $B^{-1}$ by $F_* \cO_X$. In order to show assertions (i) and (ii) it will
be enough to check that $a \not= 0$. But the push-out of $a$ under the map
$F_* \cO_X \ra B$ gives the non-split extension of $B^{-1}$ by $B$, which proves the claim. The 
proof of the equalities $\dim \Hom(F_*\cO_X, \cA) = \dim \Hom(\cA, B^{-1}) = 1$ is standard. Note that
\eqref{es1} is the pull-back under $F$ of the exact sequence in (ii).

\bigskip

The restriction of the bilinear form $\beta$ to $F_* \cO_X$ is identically zero, since $\beta$
factorizes through the line bundle quotient $B$ and $\beta$ is alternating (see section 1). Hence the
restriction of the quadratic form $\det$ lies in $F^* (F_* \cO_X)^* \subset \Sym^2 (F_* \cO_X)^*$, i.e.,
corresponds to an $\cO_X$-linear map $\alpha : F^*F_* \cO_X = \cO_X \oplus K_X  \ra \cO_X$. We note that
$\alpha_{|\cO_X} = \mathrm{id}$ and $\alpha_{|K_X} = 0$, since the summand $K_X$ corresponds to nilpotent
endomorphisms --- see \eqref{nilpendo}. This proves (iii).

\bigskip

Finally we observe that the bilinear form $(x,y) \mapsto \beta(\phi(x),y)$ factorizes through
the quotient $B \times B^{-1}$, since $\beta$ is identically zero on $F_*\cO_X$ by part (iii) and
since the radical of $\beta$ equals $\cO_X$. On the other hand it is easily seen that the bilinear
form $(x,y) \mapsto \langle x, \psi(y) \rangle$ also factorizes through $B \times B^{-1}$,
which proves (iv).
\end{proof}

\bigskip

Since $F_* \cO_X$ is stable and $\mu(F_* \cO_X) = \frac{g-1}{2} > 0$, we deduce that the Harder-Narasimhan
filtration of $\cA$ is
$$ 0 \subset F_* \cO_X \subset \cA. $$
However, since $F_*L$ is stable, Proposition \ref{stability} implies that the 
orthogonal bundle $(\cA, \det)$ is stable.

\bigskip
\noindent
{\bf Remark.} Similarly, it can be shown that the rank-$4$ vector bundle $\hat{\cA} := \End(F_*L)$ does
not depend on the line bundle $L$, that it fits 
into the exact sequence 
$$ 0 \lra F_* \cO_X \lra \End(F_*L) \lra \left( F_* \cO_X \right)^* \lra 0,$$
giving its Harder-Narasimhan filtration, and that $(\hat{\cA}, \det)$ is a stable orthogonal bundle.

\subsection{An orthogonal rank-$4$ bundle}

Besides the example given in the last remark of section 3.1, we can consider the unstable
decomposable rank-$4$  vector bundle
$$ \cB = F_*\cO_X \oplus (F_*\cO_X)^*$$
and endow it with the $\cO_X$-valued quadratic form
$$ Q(x +x^*) = q(x) + q^*(x^*) + \langle x,x^* \rangle, $$
where $x,x^*$ are local sections of the rank-$2$ bundles $F_*\cO_X$ and $(F_*\cO_X)^*$
respectively. The bracket $\langle x,x^* \rangle$ denotes the standard pairing between 
$F_*\cO_X$ and its dual $(F_*\cO_X)^*$ and $q,q^*$ are projections onto $\cO_X$ of the
direct sums (see Lemma \ref{lemma1})
\begin{equation} \label{defquadq}
 F^* \left( F_* \cO_X \right) = \cO_X \oplus K_X, \qquad F^* \left( F_* \cO_X \right)^* = \cO_X \oplus 
K_X^{-1}. 
\end{equation}
It is clear that $Q$ is non-degenerate and that $Q$ restricts to $q$ and $q^*$ on the direct
summands $F_*\cO_X$ and  $(F_*\cO_X)^*$ respectively.

\begin{prop}
The orthogonal rank-$4$ bundle $(\cB,Q)$ is stable.
\end{prop}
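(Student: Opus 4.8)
The plan is to verify stability directly from the definition: I must show that every isotropic subbundle $\cF \subset \cB$ has $\deg \cF < 0$. Since $\cB = F_*\cO_X \oplus (F_*\cO_X)^*$ has rank $4$ with $\mu(F_*\cO_X) = \frac{g-1}{2}>0$, isotropic subbundles can have rank $1$ or $2$ (a maximal isotropic subspace for a non-degenerate rank-$4$ form has dimension $2$). I would treat the two ranks separately, and in each case extract from the inclusion $\cF \hookrightarrow \cB$ the two component maps $\cF \to F_*\cO_X$ and $\cF \to (F_*\cO_X)^*$ obtained by composing with the projections. The strategy is then to use stability of $F_*\cO_X$ (hence also of its dual, which has slope $-\frac{g-1}{2}$) to bound the degrees of the images of these component maps, and to use the isotropy condition to force cancellation.

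For the rank-$1$ case, write $\cF = M$ a line subbundle, with component maps $s : M \to F_*\cO_X$ and $t : M \to (F_*\cO_X)^*$. The isotropy condition $Q|_M = 0$ says $q(s) + q^*(t) + \langle s, t\rangle = 0$ as a section of $\Sym^2 M^*$; in particular the pairing term $\langle s,t\rangle$ is a square of a linear form, by the kernel description $\ker\PP = F(V^*)$ from Section 1. If both $s$ and $t$ are nonzero, stability of $F_*\cO_X$ gives $\deg M < \mu(F_*\cO_X)$ and stability of $(F_*\cO_X)^*$ gives $\deg M < -\mu(F_*\cO_X)$, so $\deg M < 0$ and we are done. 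If $t = 0$ then $M \hookrightarrow F_*\cO_X$ is isotropic for $q$; but $q$ is the projection $F^*F_*\cO_X = \cO_X \oplus K_X \to \cO_X$, so $M$ isotropic for $q$ forces $F^*M \subset K_X$, i.e. $\deg M \le \frac{1}{2}\deg K_X \cdot \tfrac{1}{2}$... more precisely $F^*M \hookrightarrow K_X$ gives $2\deg M \le 2g-2$, hence $\deg M \le g-1$; this is not yet negative, so here I must use more: since $M \hookrightarrow F_*\cO_X$ and $F_*\cO_X$ is stable of slope $\frac{g-1}{2}$, we actually get $\deg M < \frac{g-1}{2}$, still not negative. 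The resolution is that $M$ isotropic inside $F_*\cO_X$ with $t=0$ contradicts $M$ being a genuine subbundle of the \emph{non-degenerate} $\cB$: an isotropic line in $F_*\cO_X$ has nonzero pairing with some vector of $(F_*\cO_X)^*$, so one shows such $M$ cannot be destabilizing by combining with the quadratic constraint — I expect the clean argument is that $F_*\cO_X$ itself contains no $q$-isotropic subbundle of non-negative degree, which follows because $q$ corresponds to the surjection $F^*F_*\cO_X \to \cO_X$ whose kernel $K_X$ has slope $2g-2$, forcing any isotropic $M$ to satisfy $\deg M \le \frac{g-1}{1}$ paired against stability... I will need to pin this down, and it is the delicate point.

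For the rank-$2$ case, $\cF$ is a maximal isotropic subbundle, so $\cB/\cF \cong \cF^*$ as the form identifies the quotient with the dual; hence $\deg \cF = -\deg \cF^* = -\deg(\cB/\cF)$, forcing $2\deg\cF = \deg \cB = 0$, so $\deg \cF = 0$ — but I need strict inequality $\deg \cF < 0$, so I must rule out a degree-$0$ maximal isotropic subbundle. If $\deg \cF = 0$, the composite $\cF \to \cB \to F_*\cO_X$ and $\cF \to \cB \to (F_*\cO_X)^*$ are maps between slope-$0$, slope-$\frac{g-1}{2}$ and slope-$(-\frac{g-1}{2})$ bundles; since $F_*\cO_X$ is stable with positive slope, a nonzero map $\cF \to F_*\cO_X$ from a semistable degree-$0$ bundle must have image of negative degree inside $F_*\cO_X$'s... here I would argue that $\deg 0$ isotropic $\cF$ must map to $(F_*\cO_X)^*$ with image of rank $\le 1$ and degree $\le 0$, and that the kernel of $\cF \to (F_*\cO_X)^*$ sits inside $F_*\cO_X$ isotropically, reducing to the rank-$1$ analysis above; the non-split extension structure and the fact that $q$ vanishes only on the $K_X$-type sublocus then yields the contradiction.

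\medskip

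The main obstacle I anticipate is the rank-$1$ sub-case where one component map vanishes (the case $M \hookrightarrow F_*\cO_X$ with $M$ isotropic for $q$ alone): naive degree/slope bounds give only $\deg M < \frac{g-1}{2}$, which is positive, so the argument genuinely needs the explicit description of $q$ as the evaluation map $F^*F_*\cO_X \to \cO_X$ — equivalently, that an isotropic line bundle in $(F_*\cO_X, q)$ pulls back into the sub-line-bundle $K_X \subset F^*F_*\cO_X$, forcing $2\deg M \le 2g-2$ but then crucially combined with the requirement that $M$ be isotropic in the \emph{whole} $\cB$ (not just in $F_*\cO_X$), which it cannot be if its pairing with $(F_*\cO_X)^*$ is forced to be nonzero. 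Making this last step airtight — essentially showing $(F_*\cO_X, q)$ has no isotropic subbundle realizable inside the non-degenerate $\cB$ with non-negative degree — is where the real content lies; everything else is bookkeeping with slopes and the Section 1 facts about $\ker \PP$ and alternating forms.
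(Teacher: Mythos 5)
Your overall strategy (split by the rank of the isotropic subbundle, use stability of $F_*\cO_X$ and $(F_*\cO_X)^*$ to control the component maps, and exploit the explicit description of $q$) is the right one, but there are two genuine gaps. The first is the one you flag yourself: the rank-$1$ subcase $M \hookrightarrow F_*\cO_X$ with vanishing second component. You correctly reduce to showing that $F_*\cO_X$ has no $q$-isotropic line subbundle, but the slope bounds you try ($F^*M \hookrightarrow K_X$ gives only $\deg M \leq g-1$) cannot close this, and no appeal to non-degeneracy of $Q$ on all of $\cB$ will rescue it either: a $q$-isotropic line subbundle of the direct summand $F_*\cO_X$ \emph{is} automatically a $Q$-isotropic line subbundle of $\cB$, since $Q$ restricts to $q$ there. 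The missing idea is adjunction for the Frobenius: $\Hom(M, F_*\cO_X) \cong \Hom(F^*M, \cO_X)$, and under this isomorphism a nonzero map $M \to F_*\cO_X$ corresponds to the nonzero composite $F^*M \to F^*F_*\cO_X \to \cO_X$ with the evaluation map --- which is exactly $q|_M$. Hence $q|_M \neq 0$ for \emph{every} line subbundle $M$ of $F_*\cO_X$, with no degree hypothesis needed. This is precisely how the paper disposes of the case.

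The rank-$2$ case also contains a false step. For a Lagrangian subbundle $\cF$ the identification $\cB/\cF \cong \cF^*$ only gives $\deg \cB = \deg \cF + \deg(\cB/\cF) = \deg\cF - \deg\cF = 0$, a tautology; it does not force $\deg\cF = 0$ (in a hyperbolic bundle $N \oplus N^*$ the summand $N$ is Lagrangian of arbitrary degree). So your reduction to excluding only $\deg\cF = 0$ is unjustified, and the argument you sketch for that subcase is incomplete anyway. The paper's route is short: if $S$ is isotropic of rank $2$ with $\mu(S) \geq 0$, then $S$ is stable (a destabilizing line subbundle would be an isotropic line subbundle of $\cB$ of non-negative degree, excluded by the rank-$1$ case), hence $\Hom(S, (F_*\cO_X)^*) = 0$ by comparing slopes of stable bundles; therefore $S$ is a rank-$2$ subsheaf of the rank-$2$ bundle $F_*\cO_X$, and isotropy of $S$ would force $q$ to vanish identically, which it does not.
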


\begin{proof}
Let $M$ be an isotropic line subbundle of $F_*\cO_X \oplus (F_*\cO_X)^*$ and assume
that $\deg M \geq 0$. By stability of $(F_*\cO_X)^*$ the line bundle $M$ is 
contained in $F_*\cO_X$, which gives by adjunction a nonzero map $F^*M \ra
F^*F_*\cO_X \ra \cO_X$, contradicting isotropy of $M$. Next, let $S$ be an isotropic rank-$2$
subbundle of $F_*\cO_X \oplus (F_*\cO_X)^*$ and assume $\mu(S) \geq 0$. By the previous
considerations $S$ is stable, hence there is no non-zero map $S \ra (F_*\cO_X)^*$. Therefore
$S$ is a rank-$2$ subsheaf of $F_*\cO_X$, but this contradicts isotropy of $S$.
\end{proof}

\noindent
{\bf Remark.} Clearly the two bundles $\hat{\cA}$ and $\cB$ are non-isomorphic.

\section{Counterexamples to Behrend's conjecture}

We consider the rank-$7$ vector bundle
$$ \cE =  F_*\cO_X \oplus \cA \oplus (F_*\cO_X)^*$$
equipped with the non-degenerate quadratic form $Q$ defined by
$$Q(x + y + x^* ) = \langle x, x^* \rangle + \det y ,$$
where $x, x^*$ and $y$ are local sections of $F_* \cO_X$, $(F_* \cO_X)^*$ and
$\cA$ respectively. Note that the $\SO(7)$-bundle  $E_{SO(7)} = (\cE,Q)$ has a reduction to
the  Levi  subgroup 
$$L = \GL(2) \times \SO(3) \subset P \subset \SO(7)$$ 
and that its $L$-bundle $E_L$ given by the 
pair  $(F_*\cO_X, \cA)$ is stable, since $F_* \cO_X$ and $\cA$ are stable $\GL(2)-$ and $\SO(3)$-bundles respectively. 
Since $\mu(F_* \cO_X) > \mu(\cA) = 0$, we obtain that the canonical reduction
of the unstable $\SO(7)$-bundle $(\cE,Q)$ is given by the $P$-bundle $E_P := E_L\times^L P$.
Using the exact sequence \eqref{so7modp} we obtain the equality
$$ \Hom(F_*\cO_X, \cA) = H^0(X, E_P \times^P \mathfrak{so}(7)/\mathfrak{p} ).$$
Note that $H^0(X, E_P \times^P \DD) \subset \Hom(F_* \cO_X, (F_* \cO_X)^*) = 0$ by
stability of $F_* \cO_X$. But the former space is nonzero by Proposition \ref{inclusionA} (ii).
Hence this provides a counterexample to Behrend's conjecture.

\bigskip

The following unstable $\SO(8)$-bundles also provide counterexamples
to Behrend's conjecture
$$ F_*\cO_X \oplus \hat{\cA} \oplus (F_*\cO_X)^* \qquad \text{and} \qquad
F_*\cO_X \oplus \cB \oplus  (F_*\cO_X)^*,$$
with the quadratic form given by the standard hyperbolic form on the two
summands $F_*\cO_X \oplus (F_*\cO_X)^*$ and the quadratic forms on  $\hat{\cA}$
and $\cB$ introduced in sections 4.1 and 4.2. Similarly we use the exact sequence \eqref{so8modp}
and Proposition \ref{inclusionA} (ii) to show that the corresponding spaces
$H^0(X, E_P \times^P \mathfrak{so}(8)/\mathfrak{p} )$ are non-zero.

\bigskip

\noindent
{\bf Remark.} One can work out the relationship between the unstable $\SO(7)$-bundle $(\cE,Q)$ and the 
unstable $G_2$-bundle $E_{G_2}$ constructed in \cite{H} section 5: first, we recall that $G_2 \subset \GL(6)$. Consider the
symplectic rank-$6$ bundle $\overline{\cE}$ obtained from $(\cE,Q)$ under the purely inseparable group
homomorphism $\SO(7) \ra \Sp(6) \subset \GL(6)$. Then the bundle $\overline{\cE}$ has a reduction to
$E_{G_2}$.

\section{Non-rationality of the canonical reduction of an $\SO(7)$-bundle}

This section is largely inspired by the last remark of \cite{H}. We consider the unstable $\SO(7)$-bundle
$(\EE, Q)$ introduced in section 5. We denote by $K$ the field $k(t)$, by $X_K$ the curve  $X \times_k K$ and by
$\EE_K$ the vector bundle over $X_K$ obtained as pull-back of $\EE$ under the field extension $K/k$. We will
consider the quadratic form $\widetilde{Q}_K$ on the bundle $\EE_K$ defined by 
$$ \widetilde{Q}_K (x+y+x^*) = \langle x,x^* \rangle + \det y + t q(x),$$
where $x,x^*$ and $y$ are local sections of $F_*\cO_X$, $(F_*\cO_X)^*$ and $\cA$ respectively and 
$q$ is the quadratic form on $F_* \cO_X$ defined in \eqref{defquadq}. Note that by
Proposition \ref{inclusionA} (iii) one has $q(x) = \det (\phi(x))$.

\begin{prop} \label{exso7}
\begin{itemize}
\item[(i)] The quadratic form $\widetilde{Q}_K$ is non-degenerate.
\item[(ii)] The canonical reduction of the $\SO(7)$-bundle $(\EE_K, \widetilde{Q}_K)$ over $X_K$ is not
defined over $K$, but only over the inseparable quadratic extension $K' = K[s]/(s^2 -t)$.

\end{itemize}

\end{prop}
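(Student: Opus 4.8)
The plan is to treat parts (i) and (ii) in turn, with part (ii) being the real content. For part (i), non-degeneracy of a quadratic form in characteristic two is detected by the associated bilinear form together with the value of $Q$ on the radical of $\beta$. Adding the term $tq(x)$ only changes $\widetilde Q_K$ inside $F^*(F_*\cO_X)^* \subset \Sym^2(F_*\cO_X)^*$, i.e. it alters $Q$ by a square of a linear form on the summand $F_*\cO_X$; such a modification does not change the associated bilinear form $\beta$ at all. Hence $\beta$ is unchanged, its radical is the line spanned by $e_5$ (the radical of $\det$ on $\cA$), and on that radical $\widetilde Q_K$ still restricts to the nonzero form coming from $\det$. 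So $\widetilde Q_K$ is non-degenerate; this is a one-paragraph verification using the discussion of $\ker\PP = F(V^*)$ from Section 1.

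For part (ii), the strategy is: first identify the canonical reduction of $(\EE_K,\widetilde Q_K)$ as an abstract reduction (it must still be the $P$-reduction destabilised by $F_*\cO_X$, since over $\overline K$ or over $K'$ the bundle is the same as the original $\SO(7)$-bundle up to the twist, whose slopes are unchanged), and then show that the isotropic $2$-plane subbundle realising it is $F_*\cO_X$ embedded not by $\phi$ but by a "corrected" embedding $\phi_s : F_*\cO_X \hookrightarrow \EE_{K'}$ of the form $\phi + (\text{something involving }s)$, and that no such corrected embedding is isotropic over $K$ itself. Concretely: a rank-$2$ isotropic subbundle $\cF\subset \EE_K$ of slope $\mu(F_*\cO_X) > 0$ must, by stability of $\cA$ and of $(F_*\cO_X)^*$ and a Harder--Narasimhan/slope argument, project isomorphically to $F_*\cO_X$ in the first summand and have zero component in $(F_*\cO_X)^*$; so $\cF$ is the graph of a homomorphism $F_*\cO_X \to \cA$, necessarily a scalar multiple of $\phi$ (as $\Hom(F_*\cO_X,\cA)$ is one-dimensional by Proposition \ref{inclusionA}(ii)), composed with the identity on $F_*\cO_X$ plus a scalar $c$ times $\phi$ — wait, more precisely $\cF = \{(x, c\,\phi(x)) : x\}$ for a constant $c$. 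The isotropy condition $\widetilde Q_K(x + c\,\phi(x)) = 0$ becomes, using $q(x) = \det(\phi(x))$ and the computation of $\beta(\phi(x),y)$ from Proposition \ref{inclusionA}(iv), an equation of the shape $(c^2 + t\lambda)\,q(x) = 0$ in $H^0(X,\cO_X)$ for a nonzero scalar $\lambda$ absorbing the proportionality constant of (iv); hence $c^2 = t/\lambda$, which has a solution in $K' = K[s]/(s^2-t)$ but none in $K = k(t)$ since $t$ is not a square there.

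The main obstacle — and the place requiring genuine care rather than routine checking — is pinning down exactly the isotropy equation and the role of the scalar from Proposition \ref{inclusionA}(iv). One has to verify that $\widetilde Q_K(x + c\,\phi(x))$ really reduces to a multiple of $q(x)$ with the coefficient $c^2 + (\text{const})\cdot t$: the $\langle x,x^*\rangle$ term vanishes since the $(F_*\cO_X)^*$-component is zero; the $\det(c\,\phi(x)) = c^2 q(x)$ term and the $t\,q(x)$ term are visible; and the cross term $\beta(x, c\,\phi(x))$ — which a priori could contribute — must be shown to vanish or to be absorbed, using that $\beta$ restricted to $F_*\cO_X$ is zero (Proposition \ref{inclusionA}(iii)) and that $\beta(\phi(x),y)$ factors through $B\times B^{-1}$ with the relevant component of $\cF$ living only in the image of $\phi$. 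Once this algebra is organised correctly, the conclusion that the destabilising subbundle, and hence the canonical reduction $E_P$, descends to $K'$ but not to $K$ is immediate: over $K'$ we have the explicit isotropic $\cF$, and over $K$ its non-existence together with uniqueness of the canonical reduction forces non-rationality. I would also remark, as in Heinloth's argument, that this is consistent with the failure of Behrend's conjecture established in Section 5, the infinitesimal deformation there being the "first-order" shadow of this one-parameter family of isotropic planes.
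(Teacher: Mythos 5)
Your proposal is essentially correct but proceeds by a genuinely different mechanism than the paper. The paper's proof is a one-line trick: over $K'$ it exhibits an explicit unipotent involution $g_s$ of $\EE_{K'}$ (with off-diagonal entries $s\phi$ and $s\psi$, the generators normalized via Proposition \ref{inclusionA}(iv)) satisfying $Q\circ g_s=\widetilde{Q}_K$; this simultaneously gives (i) (non-degeneracy is transported by $g_s$ and checked after field extension) and (ii) (the canonical reduction of $(\EE_{K'},\widetilde{Q}_K)$ is the $g_s$-image $x\mapsto(x,s\phi(x),0)$ of the known reduction $F_*\cO_X\subset\EE$, visibly not defined over $K$). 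You instead prove (i) by observing that $tq$ lies in $F^*(F_*\cO_X)^*=\ker\PP$, so the polarisation and its radical are untouched --- a clean, slightly more conceptual argument than the paper's --- and you prove (ii) by classifying all rank-$2$ isotropic subbundles of slope $\mu(F_*\cO_X)$ and solving the resulting equation $c^2=t$. Your route buys an explicit description of \emph{all} candidate destabilizing planes (a one-parameter family, of which exactly one is isotropic, over $K'$ only), at the cost of two points the paper gets for free from $g_s$. First, your classification should note that the maximal destabilizing subbundle of $\EE$ is the polystable $F_*\cO_X\oplus\phi(F_*\cO_X)$, so the slope-$\frac{g-1}{2}$ rank-$2$ subbundles are the graphs $\{(ax,b\phi(x),0)\}$, $(a:b)\in\mathbb{P}^1$; you must also dispose of the case $a=0$ (trivially non-isotropic since $\det(b\phi(x))=b^2q(x)\neq 0$). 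Second, your assertion that over $K'$ the bundle ``is the same as the original up to the twist'' --- which you use to pin down that the canonical parabolic is the stabilizer of an isotropic $2$-plane --- is exactly what the paper proves by constructing $g_s$; without it you should instead verify directly that your isotropic plane over $K'$ satisfies the defining conditions of the canonical reduction (it has the maximal possible slope among all subbundles of $\EE$, so this is routine given Section 5, but it is not automatic). Finally, the scalar $\lambda$ you introduce to absorb the constant from (iv) is unnecessary here: the paper records $q(x)=\det(\phi(x))$ exactly, and the cross term between the summand $F_*\cO_X$ and $\cA$ vanishes simply because $Q$ is an orthogonal direct sum, so the isotropy equation is precisely $(c^2+t)\,q=0$.
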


\begin{proof}
We consider the vector bundle $\EE_{K'}$ over $X_{K'}$ obtained from $\EE_K$ under the field extension $K'/K$
and introduce the automorphism $g_s$ of $\EE_{K'}$ defined by the matrix
$$
\left(
\begin{array}{ccc} 
\mathrm{Id}_{F_*\cO_X} & 0 & 0 \\
s \phi & \mathrm{Id}_{\cA} & 0 \\
0 & s \psi & \mathrm{Id}_{(F_*\cO_X)^*}
\end{array}
\right).
$$
We choose the generators $\phi \in \Hom(F_*\cO_X, \cA)$ and $\psi \in \Hom(\cA, B^{-1})$ such that
$\beta(\phi(x) , y ) = \langle x, \psi (y) \rangle $ for any local sections $x,y$ of $F_* \cO_X$ and
$\cA$ respectively --- see  Proposition \eqref{inclusionA} (iv). Note that $g_s$ is an involution.
Then we have
$$ g_s (x,y,x^*) = (x,y+s\phi(x), x^* + s \psi(y)).$$
We also denote by $Q$  and  $\widetilde{Q}_K$ the quadratic forms on $\EE_{K'}$ obtained from $Q$  and 
$\widetilde{Q}_K$ under the field
extensions $K'/k$ and $K'/K$ respectively. Then we have the equality
\begin{equation} \label{quadf}
Q(g_s (x,y,x^*)) = Q(x,y,x^*) + s^2 q(x) =  \widetilde{Q}_K(x,y,x^*),
\end{equation}
i.e., the quadratic forms $Q$ and $\widetilde{Q}_K$ on $\EE_{K'}$ differ by the automorphism $g_s$. In 
particular $\widetilde{Q}_K$ is non-degenerate and the pair $(\EE_K, \widetilde{Q}_K)$ determines
an $\SO(7)$-bundle over the curve $X_K$.

\bigskip

Because of \eqref{quadf} the canonical reduction of the $\SO(7)$-bundle $(\EE_K, \widetilde{Q}_K)$
is given by the isotropic rank-$2$ vector bundle $\alpha : F_* \cO_X \hookrightarrow \EE_{K'}$ with
$\alpha(x) = (x, s \phi(x), 0)$ for any local section $x$ of $F_* \cO_X$. It is clear that the
inclusion $\alpha$ is not defined over $K$.
\end{proof}


\end{document}